\newtheorem{theorem}{Theorem}[section]
\newtheorem{conjecture}[theorem]{Conjecture}
\newtheorem{lemma}[theorem]{Lemma}
\newtheorem{definition}[theorem]{Definition}
\newtheorem{props}[theorem]{Properties}
\theoremstyle{remark}
\newtheorem{example}[theorem]{Example}
\def\D{\partial}
\def\csch{{\rm csch}}
\def\VK{{V^{[\leq{}K]}}}
\def\TnK{{T_n^{[\leq{}K]}}}
\def\glie{{\mathfrak{g}}}
\def\End{{\rm End}}
\def\b{{\bar{\phantom{w}}}}
\def\Maps{{\rm Maps}}
 \def\Aut{{\rm Aut}}
\def\Vr{{V^{[r]}}}
\def\Vbr{{\overline{V}^{[r]}}}
\def\id{{\rm id}}
\def\ad{{\rm ad}}
\def\Z{\mathbb{Z}}
\def\R{\mathbb{R}}
\def\N{\mathbb{N}}
\def\BCH{{\rm BCH}}
\def\H{\mathbb{H}}
\begin{document}

\title{Universal averages in gauge actions}

\authors{Ruth Lawrence and Maor Siboni}

\address{Einstein Institute of Mathematics, Hebrew University of Jerusalem, ISRAEL\\
Racah Institute of Physics, Hebrew University of Jerusalem, ISRAEL}
\email{ruthel.naimark@mail.huji.ac.il}

\begin{abstract}
We give a construction of a universal average of Lie algebra elements whose exponentiation
gives (when there is an associated Lie group) a totally symmetric geometric mean of Lie group elements (sufficiently close to the identity) with the property that in an action of the group on a space $X$ for which $n$ elements all take a particular point $a\in{}X$ to a common point $b\in{}X$, also the mean
will take $a$ to $b$. The construction holds without the necessity for the existence of a Lie group and the universal average $\mu_n(x_1,\ldots,x_n)$ is a totally symmetric
universal expression in the free Lie algebra generated by $x_1,\ldots,x_n$. Its expansion
up to three brackets is found explicitly and various properties of iterated averages are
given. There are applications to the construction of explicit symmetric differential
graded Lie algebra models. This work is based on the second author's minor thesis \cite{Si}.
 \end{abstract}
 \primaryclass{17B01}\secondaryclass{17B55, 55P62}
\keywords{DGLA, infinity structure, Maurer-Cartan, Baker-Campbell-Hausdorff formula}
 \maketitlepage

\section{Introduction}

Suppose that $G$ is a Lie group with corresponding Lie algebra $\glie$. The exponential map
provides a smooth diffeomorphism between a suitable convex centrally symmetric neighbourhood
(say $U$) of $0\in\glie$ and its image, a neighbourhood (which we denote $V$) of $1\in{}G$.
The map  $x\longmapsto-x$ on $U$ induces the map $g\longmapsto{}g^{-1}$  on $V$. The map
$g\longmapsto\frac12g$ on $U$ induces a map on $V$ which we will call square-root and for which
\[\sqrt{g^{-1}}=\left(\sqrt{g}\right)^{-1},\quad(\sqrt{g})^2=g\]

Next observe that for $g,h\in{}G$ sufficiently close to each other so that
 $g^{-1}h\in{}V$ (and thus also $h^{-1}g\in{}V$),
\[g\cdot\sqrt{g^{-1}h}=h\cdot\sqrt{h^{-1}g}\]
which we denote by $k$. Being symmetric in $g$, $h$, this expression can be used to define the
geometric mean of $g$ and $h$, which is neither $\sqrt{gh}$ nor $\sqrt{hg}$ (even when these are
defined, being in general distinct from each other). Apart from its symmetry, the characterising
feature of this expression is that in any action of $G$ on a space $X$ for which $g(a)=h(a)=b$,
some $a,b\in{}X$, also $k(a)=b$.
\[\includegraphics[width=.3\textwidth]{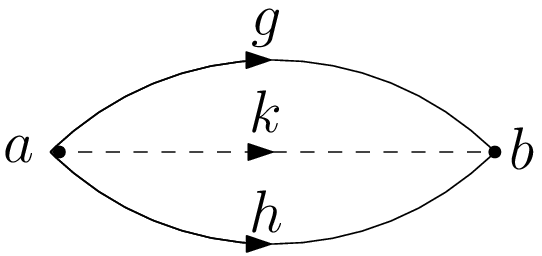}\]

The aim of this paper is to extend this idea of the geometric mean to an arbitrary number of
elements, and in particular, in terms of their logarithms, consider it as a universal average
$\mu_n$ at the Lie algebra level which we show exists as a totally symmetric element of the free Lie algebra on $n$ generators, along with giving explicit formulae for the first few orders and general properties.

In \S2, the Baker-Campbell-Hausdorff formula and its basic properties are recalled. In \S3, we
translate the problem into a purely algebraic one at the Lie algebra level and, using \S2, give a  closed formula for $\mu_2(x,y)$ in the free Lie algebra on two generators. In \S4, we give an
algorithm for $\mu_n$, as the limit of an iterative procedure in terms of $\mu_{n-1}$, including
a proof of its existence and an alternative algebraic characterisation. In \S5, we use this
characterisation to compute the expansion of $\mu_n$ up to third order in brackets while in
\S6, a number of general properties of the $\mu_n$ are discussed. In \S7 we give an example on
$SL(2,\R)$. We conclude in \S8 by indicating an application of universal averages to the
explicit construction of symmetric DGLA models of simple cells.

\section{The Baker-Campbell-Hausdorff formula}

For non-commuting indeterminates $x$ and $y$, there are unique homogeneous (non-commuting)
polynomials $F_n(x,y)$ of degree $n$, for $n\in\mathbb{N}$, such that, as formal series
\[\exp(x).\exp(y)=\exp\left(\sum_{n=1}^\infty{}F_n(x,y)\right)\>.
\]
In particular, $F_1(x,y)=x+y$ and it is a classical result that for $n>1$, $F_n(x,y)$ lies
in the free Lie algebra on the two generators $x$, $y$, that is, it can be expressed as a
linear combination of iterated brackets of $x$, $y$; see \cite{E} for a short proof. The
formula for $\sum_{n=1}^\infty{}F_n(x,y)$ is known as the {\sl Baker-Campbell-Hausdorff formula},
and we will denote it by $\BCH(x,y)$; see \cite{D} for a computational formula.

\begin{props}\label{bchprops}
 \begin{itemize}
 \item[(a)] The first few terms of $\BCH(x,y)$ are
     \begin{align*}
    \BCH(x,y)=x+y&+\frac{1}{2}[x,y]+\frac{1}{12}(X^2y+Y^2x)-\frac{1}{24}XYXy\\
    -\frac{1}{720}(X^4y+Y^4x)&+\frac{1}{120}(X^2Y^2x+Y^2X^2y)
    +\frac{1}{360}(XY^3x+YX^3y)+\cdots
     \end{align*}
     where $X,Y$ denote $\ad_x,\ad_y$.
 \item[(b)] The formula is universal and thus also applies to the operators $\ad_x$, $\ad_y$ for $x,y\in{}A$, in any Lie algebra $A$. By the Jacobi identity, $\BCH(\ad_x,\ad_y)=\ad_{\BCH(x,y)}$ and so in $\Aut(A)$,
     $(\exp{\ad_x})\circ(\exp{\ad_y})=\exp{\ad_{\BCH(x,y)}}$.
 \item[(c)] Uniqueness implies associativity of BCH, that is
  $\BCH\big(\BCH(x,y),z\big)=\BCH\big(x,\BCH(y,z)\big)$ for any symbols $x,y,z$.
  Denote the combined BCH of $n$ symbols $x_1,\ldots{}x_n\in{}A$ by $\BCH(x_1,\ldots,x_n)$ so that
   \[\exp{\BCH(x_1,\ldots,x_n)}=(\exp{x_1})\cdots(\exp{x_n})\>,\]
     in the (completed) universal enveloping algebra of $A$ and \[\exp{\BCH(\ad_{x_1},\ldots,\ad_{x_n})}
     =(\exp{\ad_{x_1}})\cdots(\exp{\ad_{x_n}})\in\Aut(A)\>.
     \]
     Again $\BCH(x_1,\ldots,x_n)$ will be a formal sum of terms, the zeroth order being $x_1+\cdots+x_n$ and higher orders being linear combinations of (repeated) Lie brackets of the $x_i$'s.
 \item[(d)] Uniqueness similarly implies that $\BCH(x,-x)=0$ while \[\BCH(-x_1,\ldots,-x_n)=-\BCH(x_n,\ldots,x_1)\>.\]
\item[(e)] $\BCH(x,y,-x)=(\exp{\ad_x})y$.
\item[(f)] $\BCH(\exp(\ad_e)x,\exp(\ad_e)y))=\exp(\ad_e)\BCH(x,y)$.
\end{itemize}
\end{props}

\section{Algebraic formulation of the problem and $\mu_2$}

We make a non-standard definition of Lie algebra action. Let $\glie$ be a Lie algebra.
\begin{definition}
By a $\glie$-action we will mean a set $X$ along with bijections $u_e:X\longrightarrow{}X$ for any $e\in{}\glie$ such that
\begin{itemize}
\item[(i)] $u_f(u_e(a))=u_{\BCH(e,f)}(a)$ for all $e,f\in\glie$ and $a\in{}X$;
\item[(ii)] $u_0=\id$;
\item[(iii)] if $u_e(a)=a$ for some $e\in\glie$, $a\in{}X$ then $u_{\frac12e}(a)=a$.
\end{itemize}
\end{definition}

Other consequences follow from the properties of $\BCH$. For example, $u_{-e}=(u_e)^{-1}$.

The main theorem of the paper is the following.

\begin{theorem} There exists a totally symmetric expression $\mu_n(x_1,\ldots,x_n)$, in the free Lie algebra on the $n$ generators, $x_1,\ldots,x_n$ with the following property. For any Lie algebra $\glie$ and $\glie$-action on $X$, if $x_1,\ldots,x_n\in\glie$, $a,b\in{}X$ satisfy $u_{x_i}(a)=b$ for all $i=1,\ldots,n$ then \[u_{\mu_n(x_1,\ldots,x_n)}(a)=b\>.\]
\end{theorem}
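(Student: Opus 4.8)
\emph{Reformulation via stabilisers.} The first step is to translate the conclusion into a statement about the set $S_a=\{e\in\glie:u_e(a)=a\}$. The axioms make $S_a$ a subgroup of $\glie$ under $\BCH$: it contains $0$, is closed under negation (since $u_{-e}=u_e^{-1}$), and is closed under the group operation because $u_{\BCH(e,e')}(a)=u_{e'}(u_e(a))=u_{e'}(a)=a$ when $e,e'\in S_a$; moreover axiom (iii) makes it closed under halving $e\mapsto\half e$. Now if $u_{x_i}(a)=b$ for all $i$, then applying $u_{-x_1}$ to $u_{x_i}(a)=b=u_{x_1}(a)$ and using $u_{-x_1}(b)=u_{\BCH(x_1,-x_1)}(a)=u_0(a)=a$ shows that each $e_i:=\BCH(x_i,-x_1)$ lies in $S_a$ (with $e_1=0$). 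Conversely $u_{\mu_n}(a)=b$ is equivalent to $\BCH(\mu_n,-x_1)\in S_a$. So it suffices to construct a totally symmetric $\mu_n$ for which $\BCH(\mu_n,-x_1)$ is forced into $S_a$ purely by the facts that every $e_i\in S_a$ and that $S_a$ is closed under $\BCH$ and halving. The base cases are $\mu_1(x_1)=x_1$ and $\mu_2(x_1,x_2)=\BCH\big(\half\BCH(x_2,-x_1),x_1\big)$ from \S3.

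\emph{The iteration.} I would proceed by induction on $n$, assuming $\mu_{n-1}$ exists with the stated property, is totally symmetric, has linear part equal to the arithmetic mean, and restricts to the identity on the diagonal. Define a map $T$ on $n$-tuples of (completed free) Lie algebra elements whose $i$th coordinate is the leave-one-out $(n-1)$-average,
\[T(z_1,\ldots,z_n)=\big(\mu_{n-1}(z_2,\ldots,z_n),\ldots,\mu_{n-1}(z_1,\ldots,z_{n-1})\big),\]
the $i$th entry being $\mu_{n-1}$ of all $z_j$ with $j\neq i$. The point is that the relation to be preserved is $T$-invariant: if $u_{z_i}(a)=b$ for every $i$, then applying the inductive hypothesis to each $(n-1)$-subset gives $u_{(Tz)_i}(a)=b$ for every $i$ as well. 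Hence, starting from $(x_1,\ldots,x_n)$, every coordinate of $T^k(x_1,\ldots,x_n)$ still sends $a$ to $b$, and $\mu_n$ will be defined as the common limit of these coordinates.

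\emph{Convergence and symmetry.} Grade the completed free Lie algebra by bracket length, $\hat L=\prod_d\hat L_d$. On linear parts $\mu_{n-1}$ is the arithmetic mean, so $T$ sends the linear part of $z_i$ to $\frac{1}{n-1}(S-z_i)$ with $S=\sum_j z_j$; this fixes $S$ and contracts the differences $z_i-z_j$ by $-\frac{1}{n-1}$, so for $n\geq 3$ the linear parts converge to $\frac1n(x_1+\cdots+x_n)$. In degree $d$, the degree-$d$ part of $(Tz)_i$ equals this same contracting linear operator applied to the degree-$d$ parts, plus a term depending only on components of degree $<d$ (the nonlinear $\BCH$ and $\mu_{n-1}$ corrections). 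By induction on $d$, once the lower degrees have stabilised this is an affine iteration on the finite-dimensional space $\hat L_d^{\oplus n}$ whose linear part has norm $\frac{1}{n-1}<1$, hence converges. Thus $T^k(x_1,\ldots,x_n)$ converges degree-by-degree, the coordinate differences tend to $0$, and all coordinates share a single limit $\mu_n(x_1,\ldots,x_n)$. Since $T$ is $S_n$-equivariant and $\mu_{n-1}$ is symmetric, this diagonal limit is totally symmetric.

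\emph{The main obstacle.} The genuinely delicate point is passing the relation $u_{(\cdot)}(a)=b$ from the finite stages to the limit: the abstract definition of a $\glie$-action provides no continuity of $e\mapsto u_e$, so one cannot simply let $k\to\infty$ in $u_{(T^kx)_i}(a)=b$, and indeed $S_a$ need not be closed in the degree filtration. I would resolve this by reducing to the nilpotent quotients $\glie/\glie^{>d}$: there the series $\mu_n$ terminates, the degree-by-degree iteration above stabilises after finitely many steps, and hence $\mu_n$ becomes a genuine finite word in $\BCH$ and halving applied to the $x_i$, so $\BCH(\mu_n,-x_1)\in S_a$ on the nose and $u_{\mu_n}(a)=b$ holds exactly; the general complete filtered case then follows by taking the inverse limit over $d$. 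Establishing this finite-termination-per-degree, equivalently the alternative algebraic characterisation of $\mu_n$ as the solution of a fixed-point equation expressible through finitely many $\BCH$ and halving operations in each degree, is the heart of the argument and the step I expect to demand the most care.
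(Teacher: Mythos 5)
Your stabiliser reformulation and your leave-one-out iteration are exactly the paper's route (the set $S_a$, the map $T_n$ of \S4, induction on $n$), and your formula $\mu_2(x_1,x_2)=\BCH\big(\half\BCH(x_2,-x_1),x_1\big)$ does agree with the paper's $\BCH(x_1,\half\BCH(-x_1,x_2))$ by Properties 2.1(d). But the step you yourself single out as the heart of the argument is false: the iteration does \emph{not} stabilise after finitely many steps in the nilpotent quotients. The paper's analysis shows why: viewed as a linear substitution endomorphism of the free Lie algebra, $T_n$ truncated to $\leq K$ brackets is block lower triangular with diagonal entries $(-\tfrac1{n-1})^{r+1-s}$, all \emph{nonzero}; already at the bracket-free level the differences $x_i-x_j$ are scaled by $-\tfrac1{n-1}$ at every step, so for $n=3$ the iterates approach the centroid geometrically without ever reaching it. Hence each truncation of $\mu_n$ is a genuine limit, not a finite word in $\BCH$ and halving applied to the $x_i$, and your claimed exact membership $\BCH(\mu_n,-x_1)\in S_a$ is never established. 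The paper handles this limit passage differently: it proves $u_{x_i^k}(a)=b$ for every finite $k$ by induction and then invokes continuity of $x\mapsto u_x(a)$. You are right that the bare axioms (i)--(iii) supply no such continuity --- this is a genuine informality in the paper, whose intended settings (formal/pro-nilpotent, gauge actions) do provide it --- but your proposed repair does not close the gap; it replaces an unproved continuity assertion with a false termination claim.

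There is a second, more technical error in your convergence argument. The matrix of the degree-preserving part of the iteration on $n$-tuples, with entries $\tfrac1{n-1}$ off the diagonal and $0$ on it, has eigenvalue $1$ (on the sum direction), not norm $\tfrac1{n-1}<1$; in bracket degree $d$ your affine iteration has linear part equal to this matrix tensored with the identity on the degree-$d$ piece, so eigenvalue $1$ recurs with high multiplicity and convergence is not automatic --- along the sum direction the iterates drift by the inhomogeneous term, and since the lower degrees never exactly stabilise one must show this term tends to zero. That can be salvaged (the limiting correction vanishes because $\mu_{n-1}$ restricts to the identity on the diagonal, so the increments are geometrically summable), but the cleaner resolution is the paper's: working inside the free Lie algebra itself rather than on $n$-tuples, the $r$-bracket diagonal blocks of $\TnK$ have eigenvalues $(-\tfrac1{n-1})^{r+1-s}$ where $s$ counts occurrences of $y_1=x_1+\cdots+x_n$, and eigenvalue $1$ occurs exactly once, in degree zero, precisely because $[y_1,y_1]=0$; Jordan normal form then gives convergence of $(\TnK)^k$ at every truncation. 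So your convergence scheme is morally the paper's block-triangular analysis, but as stated the norm bound is wrong, and the eigenvalue-$1$ direction must be addressed explicitly.
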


It will be proved constructively at the end of this section for $n=2$ and in the next section for general $n$.

\begin{example}
Suppose that $G$ is a Lie group acting on a manifold $X$ and $\glie$ is its Lie algebra. Putting  $u_e(a)=(\exp(-e))(a)$ will define a $\glie$-action so long as condition (iii) is satisfied, for example if the action of $G$ is free, although in this case the theorem would be vacuous.
\end{example}

\begin{example}
Suppose that $E$ is a trivialised vector bundle over a base space $B$ with fibre $V$. Let
$\Omega$ be the space of connections on $E$, that is $\End\,V$-valued 1-forms on $B$,
\[\omega=\sum_i\omega_idx_i\]
in local coordinates $(x_i)$ on $B$, with smooth functions
$\omega_i(x)\in\End\,V$. Then $\omega\in\Omega$ defines a notion of flat sections of $E$ with respect to $\omega$, by functions $v:B\longrightarrow{}V$ for which $(d+\omega)v=0$, that is $\frac{\partial{}v}{\partial{}x_i}+\omega_i{}v=0$.

\noindent Let $G$ be the gauge group, consisting of smooth maps
$g:B\longrightarrow{}GL(V)$. It acts on sections by taking $v(x)$ to $g(x)\cdot{}v(x)$. Correspondingly, $G$ acts on $\Omega$ by
\[
g\cdot\omega=-dg\cdot{}g^{-1}+g\omega{}g^{-1}
\]
or in coordinates, by $(g\cdot\omega)_i=-\frac{\partial{}g}{\partial{}x_i}g^{-1}+g\omega_i{}g^{-1}$.
Define $X\subset\Omega$ to be the space of flat connections, solutions of the Maurer-Cartan equation
\[
X=\Big\{\omega\in\Omega\,\Big|\,d\omega+\frac12[\omega,\omega]=0\Big\}
\]
that is, $\omega_i(x)$ such that $\frac{\partial\omega_j}{dx_i}-\frac{\partial\omega_i}{dx_j}+[\omega_i,\omega_j]=0$. The action of $G$ preserves $X$. The infinitesimal gauge group action is that of $\glie=\Maps(X,\End\,V)$ while $e\in\glie$ defines a vector field on $\Omega$ (and on $X$) by
\[e\cdot\omega=-de+[e,\omega]\]
Define $u_e:X\longrightarrow{}X$ to be the flow of the infinitesimal action of $-e$ in unit time, that is, it is the map $\omega(0)\longmapsto\omega(1)$ for solutions of the equation $\dot\omega=de-[e,\omega]$. This can be written
\[a\longmapsto{}e^{-\ad_e}a+\frac{1-e^{-\ad_e}}{\ad_e}de
=(\exp(-e))\omega(\exp{e})-d(\exp(-e))\cdot\exp{e}\]
and is the same as the action of $\exp(-e)$. Note that the term is to be interpreted as a power series in $-\ad_e$. This defines a $\glie$-action on $X$, because $u_e(a)=a$ precisely when $de=\ad_ea$ which is a linear condition on $e$.
\end{example}

\begin{example}
For a regular cell complex $X$, it is possible to associate a DGLA model $A=A(X)$ over $\mathbb{Q}$ satisfying the following conditions
 \begin{itemize}
 \item[(i)] as a Lie algebra, $A(X)$ is freely generated by a set of generators, one for each cell in $X$ and whose grading is one less than the geometric degree of the cell;
 \item[(ii)] vertices (that is $0$-cells) in $X$ give rise to generators $a$ which satisfy the Maurer-Cartan equation $\partial a+\frac12[a,a]=0$ (a flatness condition);
 \item[(iii)] for a cell $x$ in $X$, the part of $\D{x}$ without Lie brackets is the geometric boundary $\D_0x$ (where an orientation must be fixed on each cell);
 \item[(iv)] (locality) for a cell $x$ in $X$, $\D{x}$ lies in the Lie algebra generated by the generators of $A(X)$ associated with cells of the closure $\bar{x}$.
 \end{itemize}

The existence and general construction of such a model was demonstrated by Sullivan in the
  appendix to \cite{TZ}.  By \cite{B4}, there exist consistent (even symmetric) towers of
   models of simplices, and such towers are unique up to (exact) DGLA isomorphism.
   The model of
   an interval is unique \cite{LS}. In \cite{GGL}, an explicit symmetric model of the bi-gon
    (exhibiting the dihedral symmetry of the bi-gon) was given, the main intermediate step
     being the construction of a `symmetric point' in the model of the boundary of the bi-gon,
     invariant under the full symmetries of the bi-gon. Similarly in \cite{GL}, an explicit construction of a model of a single triangle which is invariant under the action of the
     symmetry group $S_3$ of the triangle is given, and again the main intermediate step is the construction of a totally symmetric central `point' (solution of Maurer-Cartan).

 While the inspiration for the construction of such models came from rational homotopy theory
  (\cite{Q}, \cite{S}), their application may be to diverse fields where such infinity
  structures enter, from deformation theory to discretisation of differential equations,
  to be discussed in future work.

  The motivation for the constructions of this paper is precisely this example, in the case of the bi-gon for $\mu_2$ and an appropriate three-dimensional cell for $\mu_n$, $n>2$. For a banana-shaped cell, with two points $a$ and $b$ between which there are $n$ edges and a single three-dimensional cell, the central point of the cell is found at the midpoint of the `diagonal' from $a$ to $b$ given by the mean $\mu_n(x_1,\ldots,x_n)$.
\end{example}

\begin{proof} (Thm 3.2 for $n=2$.) Set $\mu_2(x,y)=\BCH(x,\frac12\BCH(-x,y))$. There are two requirements to check in order to verify that this is a solution.
\begin{itemize}
\item[(a)] If $u_x(a)=u_y(a)=b$ then $u_{\BCH(-x,y)}(b)=u_y(u_{-x}(b))=u_y(a)=b$. Thus $u_{\frac12\BCH(-x,y)}(b)=b$ and hence combining with $u_x(a)=b$ we get $u_{\mu_2(x,y)}(a)=b$.
\item[(b)] Interchanging $x$ and $y$ in the formula for $\mu_2(x,y)$, we get
    \[\BCH(y,\tfrac12\BCH(-y,x))=\BCH(y,\BCH(-y,x),-\tfrac12\BCH(-y,x))\]
     Since $-\BCH(-y,x)=\BCH(-x,y)$, this simplifies to $\BCH(x,\frac12\BCH(x,-y))=\mu_2(x,y)$ as required.
\end{itemize}
\end{proof}

\noindent Using the first few terms in the expansion of $\BCH$, we get
\[\mu_2(x,y)=\frac12(x+y)-\frac1{48}[x,[x,y]]-\frac1{48}[y,[y,x]]+\cdots\]
up to the second order in Lie brackets.

\noindent{\bf Remark:} The formula for $\mu_2(x,y)$ first appeared in \cite{GGL}. It is unique satisfying the conditions of the theorem, as follows from \cite{GGL}, since in the example of the Lie algebra and action coming from the bi-gon, there is only a one-parameter family of flows from $a$ to $b$, namely $\BCH(x,t\BCH(-x,y))$
and only the value $t=\frac12$ (which gives $\mu_2$) is symmetric in $x$ and $y$.

\begin{lemma}
\begin{itemize}
\item[(i)] $\mu_2(-x,-y)=-\mu_2(x,y)$
\item[(ii)] $\mu_2(\BCH(z,x),\BCH(z,y))=\BCH(z,\mu_2(x,y))$
\item[(iii)] $\mu_2(\BCH(x,z),\BCH(y,z))=\BCH(\mu_2(x,y),z)$
\end{itemize}
\end{lemma}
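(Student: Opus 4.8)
The plan is to verify each of the three identities directly from the closed form $\mu_2(x,y)=\BCH(x,\tfrac12\BCH(-x,y))$, relying only on the associativity of $\BCH$ (Properties~\ref{bchprops}(c)), the inversion rule $\BCH(-a,-b)=-\BCH(b,a)$ (Properties~\ref{bchprops}(d)), and the conjugation rule $\BCH(e,w,-e)=\exp(\ad_e)w$ (Properties~\ref{bchprops}(e)). I expect (ii) to be essentially immediate, (i) to be the genuine obstacle, and (iii) to reduce cleanly to (i) and (ii).

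For (ii) I would expand the inner bracket of $\mu_2(\BCH(z,x),\BCH(z,y))$. Writing $-\BCH(z,x)=\BCH(-x,-z)$ by part (d) and using associativity, the argument $\BCH(-\BCH(z,x),\BCH(z,y))$ collapses through the cancellation $\BCH(-z,z)=0$ to $\BCH(-x,y)$, which is exactly the inner bracket appearing in $\mu_2(x,y)$. Reassembling and using associativity once more then gives $\BCH(\BCH(z,x),\tfrac12\BCH(-x,y))=\BCH(z,\BCH(x,\tfrac12\BCH(-x,y)))=\BCH(z,\mu_2(x,y))$, as wanted.

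For (i) the naive approach---repeatedly applying (d) and cancelling---turns out to circle back to the statement one is trying to prove, so a different idea is needed. First, (d) gives the \emph{post-multiplied} form $-\mu_2(x,y)=\BCH(\tfrac12\BCH(-y,x),-x)$. The task is therefore to show that the \emph{pre-multiplied} expression $\mu_2(-x,-y)=\BCH(-x,\tfrac12\BCH(x,-y))$ agrees with it. The key step is to transport the leading $-x$ to the far right: appending the trivial factor $\BCH(x,-x)=0$ and regrouping, part (e) identifies $\BCH(-x,\,\cdot\,,x)$ with conjugation by $\exp(\ad_{-x})$, and since $\exp(\ad_{-x})\BCH(x,-y)=\BCH(-x,x,-y,x)=\BCH(-y,x)$, the whole expression becomes $\BCH(\tfrac12\BCH(-y,x),-x)$. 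This matches the post-multiplied form, proving (i). I expect this conjugation manoeuvre---recognising that transporting $-x$ across the bracket converts $\BCH(x,-y)$ into $\BCH(-y,x)$---to be the crux of the entire lemma.

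Finally, for (iii) I would avoid a second conjugation computation by deducing it from (i) and (ii). Using $\BCH(x,z)=-\BCH(-z,-x)$ from part (d), identity (i) lets me write $\mu_2(\BCH(x,z),\BCH(y,z))=-\mu_2(\BCH(-z,-x),\BCH(-z,-y))$; identity (ii), applied with $z,x,y$ replaced by $-z,-x,-y$, peels off the common left factor to give $-\BCH(-z,\mu_2(-x,-y))$; and applying (i) once more together with (d) turns this into $\BCH(\mu_2(x,y),z)$. Alternatively one could prove (iii) directly, mirroring the computation for (ii) but now retaining the outer $z$ as a conjugation that is undone upon reassembly via (e).
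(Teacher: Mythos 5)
Your proposal is correct and follows essentially the same route as the paper: (ii) is the same immediate cancellation of the inner $\BCH$, (iii) is the same combination of (i) and (ii) via 2.1(d), and (i) rests on the same conjugation manoeuvre converting $\BCH(x,-y)$ into $\BCH(-y,x)$. The only cosmetic difference is in (i), where you conjugate by $\exp(\ad_{-x})$ by appending $\BCH(x,-x)=0$ and applying 2.1(e) twice, while the paper conjugates by $\exp(\ad_x)$ and distributes it across the outer $\BCH$ using 2.1(f) --- mirror images of one identical computation.
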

\begin{proof}
\begin{itemize}
\item[(i)] By definition and 2.1(d),
\[\mu_2(-x,-y)=\BCH(-x,\tfrac12\BCH(x,-y))=-\BCH(\tfrac12\BCH(y,-x),x)
\]
But by 2.1(c),(d),(e), $\BCH(y,-x)=\BCH(x,-x,y,-x)=e^X\BCH(-x,y)$ while $e^Xx=x$ where $X=\ad_x$. Thus by 2.1(f)
\[
\mu_2(-x,-y)=-\BCH(e^X\tfrac12\BCH(-x,y),e^Xx)
=-e^X\BCH(\tfrac12\BCH(-x,y),x)
\]
which by 2.1(e) simplifies to $-\BCH(x,\frac12\BCH(-x,y))=-\mu_2(x,y)$.

\item[(ii)]
This follows immediately from the definition, since
\[\BCH(-\BCH(z,x),\BCH(z,y))=\BCH(-x,y)\]

\item[(iii)]
Follows by combining (i), (ii).
\end{itemize}
\end{proof}

\section{An algorithm for $\mu_n$}

Knowing how to average (symmetrically) pairs of objects using $\mu_2$ from \S3, one can average triples of objects by iteratively averaging pairs. Thus, starting with $x,y,z$, define three sequences $(x_k)$, $(y_k)$, $(z_k)$ in the free Lie algebra $L[x,y,z]$ on three generators $x,y,z$ by
\[
x_{k+1}=\mu_2(y_k,z_k),\quad{}y_{k+1}=\mu_2(x_k,z_k),\quad{}z_{k+1}=\mu_2(x_k,y_k)
\]
with initial conditions $x_0=x$, $y_0=y$, $z_0=z$. Pictorially, representing $x_k$, $y_k$, $z_k$ by points (though they would be edges in the representation of Example 3.5) and $\mu_2(x,y)$ by the midpoint of a line drawn between $x$ and $y$, we get
\[\includegraphics[width=.3\textwidth]{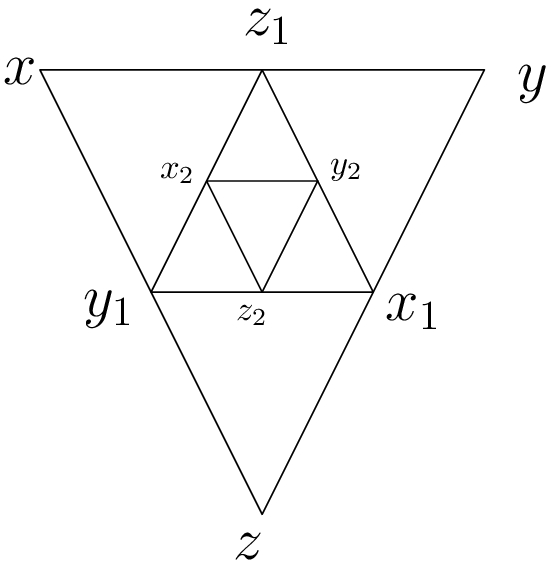}\]
Below we will prove that the three sequences do indeed all converge to a common element of $L[x,y,z]$, which we denote $\mu_3(x,y,z)$. Convergence here means the convergence of the truncated expressions with $\leq{}K$ Lie brackets, for all $K\in\N$. The case of $K=0$ is precisely the geometric picture above with convergence to the centroid.

In the same manner, one can inductively construct each universal average from the previous one, defining $\mu_n(x_1,\ldots,x_n)\in{}L[x_1,\ldots,x_n]$,  in terms of $\mu_{n-1}$.

\begin{lemma}
There is a unique sequence of totally symmetric elements $\mu_n$, $n>2$ in the free Lie algebra $L[x_1,\ldots,x_n]$ such that for each $n>2$, the $n$ sequences $(x_1^k),\ldots,(x_n^k)$ defined iteratively by
\[
x_i^{k+1}=\mu_{n-1}(x_1^k,\ldots,\widehat{x_i^k},\ldots,x_n^k)\eqno{(1)}
\]
with initial conditions $x_i^0=x_i$, $1\leq{}i\leq{}n$, will all converge to $\mu_n$ as
$k\longrightarrow\infty$, in the sense that their truncations with $\leq{}K$ Lie brackets
will converge, for all $K\in\N$. The part of $\mu_n$ without Lie brackets is
$\frac1n(x_1+\cdots+x_n)$.
\end{lemma}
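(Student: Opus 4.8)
The plan is to argue by induction on $n$, dragging along the auxiliary \emph{idempotence} property that $\mu_m(w,\dots,w)=w$ for every $w$. The base case is immediate, since $\mu_2(w,w)=\BCH(w,\tfrac12\BCH(-w,w))=\BCH(w,0)=w$ using $\BCH(w,-w)=0$ from 2.1(d); the inductive step for idempotence will be got, once $\mu_n$ is known to exist, by running the iteration (1) on the constant data $x_i=w$, which is stationary by the inductive hypothesis and so converges to $w$. I would grade the free Lie algebra by number of brackets, $L=\bigoplus_{d\ge0}L_d$, work in the completion, and prove convergence of each homogeneous component (hence of every $\le K$-truncation) by a secondary induction on the degree. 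The anchor is the bracket-free part of (1): by hypothesis the degree-$0$ part of $\mu_{n-1}$ is the plain average, so on $L_0$ the recursion sends the coefficient vector $(x_1^k,\dots,x_n^k)$ to $M(x_1^k,\dots,x_n^k)$ with $M=\tfrac1{n-1}(J-I)$ and $J$ the all-ones matrix. As $M$ is real symmetric with eigenvalue $1$ on the diagonal line $\R\mathbf 1$ and eigenvalue $-\tfrac1{n-1}$ (of modulus $<1$ precisely because $n>2$) on $\mathbf 1^\perp$, the total $\sum_i x_i^k$ is preserved while the differences contract geometrically; hence each $x_i^k$ converges, in bracket-degree $0$, to $\tfrac1n(x_1+\cdots+x_n)$. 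This already gives the last sentence of the statement and the common degree-$0$ limit.

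For the inductive step in the degree $K$ I would isolate the degree-$K$ component $u_i^k=(x_i^k)^{(K)}$. Expanding $\mu_{n-1}$ as a Lie series and collecting bracket-degree $K$, only the average (bracket-free) part of $\mu_{n-1}$ can act on the degree-$K$ inputs without raising degree, while every genuine bracket consumes at least one bracket and therefore feeds only on strictly lower-degree inputs. This yields an affine recursion
\[u_i^{k+1}=\tfrac1{n-1}\textstyle\sum_{j\ne i}u_j^k+Q_{K,i}^k,\]
where the forcing $Q_{K,i}^k$ is a fixed polynomial in the degree-$<K$ parts of $(x_j^k)_{j\ne i}$. The linear part is again $M$, so the $\mathbf 1^\perp$-component contracts; the dangerous coordinate is the one along $\mathbf 1$, where $M$ is the identity and no contraction is available. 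Here idempotence is decisive: applying the very same degree-$K$ expansion to $\mu_{n-1}(w,\dots,w)=w$ shows the forcing polynomial $Q_K$ vanishes whenever all its arguments coincide, so as the lower-degree parts converge to their common limit $\ell$ one has $Q_{K,i}^k\to Q_K(\ell,\dots,\ell)=0$. Thus the drift of the marginal coordinate tends to $0$; and since by the secondary induction the lower-degree deviations decay geometrically, the forcing decays geometrically too (up to a harmless polynomial factor should rates collide), its increments are summable, and the marginal coordinate converges. The $\mathbf 1^\perp$-coordinates converge to $0$, which simultaneously forces all $n$ sequences to share one degree-$K$ limit. This closes the induction on $K$.

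Symmetry and uniqueness are then straightforward. Total symmetry follows from $S_n$-equivariance of (1): if $\mu_{n-1}$ is totally symmetric, then the sequence produced from permuted data $(x_{\sigma(i)})$ is exactly $(x_{\sigma(i)}^k)$, and since the common limit is independent of $i$ we get $\mu_n(x_{\sigma(1)},\dots,x_{\sigma(n)})=\mu_n(x_1,\dots,x_n)$. Uniqueness is immediate, since the recursion (1) is determined by $\mu_{n-1}$ and a convergent sequence has a unique limit; with $\mu_2$ fixed, each $\mu_n$ is forced, so the whole sequence $(\mu_n)_{n>2}$ is unique.

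The main obstacle is exactly the marginal $\mathbf 1$-direction at each bracket-degree: the linearised iteration has spectral radius $1$ there, so convergence is not a contraction-mapping triviality but hinges on the accumulated forcing being finite. The two ingredients that secure this — that the forcing has limit $0$ (from idempotence of $\mu_{n-1}$, i.e. that the average of equal elements is that element) and that it decays geometrically (inherited from the genuine contraction on $\mathbf 1^\perp$ in lower degrees) — carry the real content of the lemma; everything else is bookkeeping in the bracket filtration.
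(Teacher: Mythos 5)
Your proof is correct, but it takes a genuinely different route from the paper's. The paper's key move is to observe that, since the same rule (1) is applied at every step, the entire nonlinear-looking iteration is just the $k$-th power of a single linear substitution endomorphism $T_n$ of $L[x_1,\ldots,x_n]$ (namely $x_i\mapsto\mu_{n-1}(x_1,\ldots,\widehat{x_i},\ldots,x_n)$), so that $x_i^k=T_n^k(x_i)$; it then analyses the truncated operator $T_n^{[\le K]}$ spectrally: in a Lie-monomial basis its matrix is block lower triangular with diagonal blocks given by the bracket-free part of the substitution, and after the change of variables $y_1=x_1+\cdots+x_n$, $y_i=x_{i-1}-x_i$ these blocks become diagonal with entries that are non-negative integer powers of $-\tfrac1{n-1}$, the eigenvalue $1$ occurring exactly once; Jordan normal form then shows $\bigl(T_n^{[\le K]}\bigr)^k$ converges to a rank-one projection onto the eigenvalue-$1$ eigenvector, whence all $n$ sequences converge to a common limit with bracket-free part $\tfrac1n\sum_i x_i$. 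You instead run a degree-by-degree perturbation analysis: at bracket-degree $K$ the recursion is affine with linear part $M=\tfrac1{n-1}(J-I)$ plus a forcing from strictly lower degrees (your degree count, that a term of $\mu_{n-1}$ with $r\ge1$ brackets applied to inputs of degrees $d_i$ has total degree $r+\sum d_i$, so only degrees $<K$ feed $Q_{K,i}^k$, is exactly right), and the non-contracting $\mathbf{1}$-direction is tamed by your auxiliary idempotence property $\mu_{n-1}(w,\ldots,w)=w$ — proved by a well-founded induction using only $\mu_{n-2}$ and below — which forces the forcing polynomial to vanish on the diagonal, hence to be bounded by pairwise differences that decay geometrically by your secondary induction; summability then yields convergence of the marginal coordinate, and the vanishing of the $\mathbf{1}^\perp$-components gives the common limit. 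Both arguments are sound; the comparison is instructive. Your route produces explicit geometric convergence rates (up to the polynomial factors you flag when rates collide) and isolates the idempotence of $\mu_n$, a clean property the paper never states. The paper's route, by identifying the full spectrum of every truncation and showing the eigenvalue-$1$ eigenspace is one-dimensional, delivers Lemma 4.2 essentially for free: $\mu_n$ is the unique-up-to-scaling fixed point of $T_n$, which is the characterisation on which the coefficient computations of \S5--6 rest; your orbit-convergence argument proves the lemma as stated but does not by itself show the fixed-point set of $T_n^{[\le K]}$ is a line, so it would need a supplement before supporting those later sections.
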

\begin{proof}
The proof is by induction on $n$, starting with $n=3$.
As it stands, the relations (1) are highly non-linear, since $\mu_{n-1}$ is non-linear. It is a
notoriously difficult problem to iterate non-linear operations. However, since
$(x_1^k,\ldots,x_n^k)$  are obtained from $(x_1^0,\ldots,x_n^0)$ by iterating $k$ times the
fixed relations (1), it can also be seen that they will be obtained from $(x_1^1,\ldots,x_n^1)$
by $(k-1)$  applications of the same iteration procedure. This means that if in the formulae
for $x_1^{k-1},\ldots,x_n^{k-1}$  we replace $x_1,\ldots,x_n$   by  $x_1^1,\ldots,x_n^1$,
respectively, then we will obtain $x_1^k,\ldots,x_n^k$.
Let $T_n$  denote the operation on the free Lie algebra $L[x_1,\ldots,x_n]$, specified by
substituting $x_1^1,\ldots,x_n^1$  for  $x_1,\ldots,x_n$, respectively, that is,
\[
x_i\rightarrow\mu_{n-1}(x_1,\ldots,\widehat{x_i},\ldots,x_n),\quad1\leq{}i\leq{}n
\]
This is a linear map and by the above argument,  $x_i^{k}=T_n(x_i^{k-1})$  and thus
$x_i^k=(T_n)^k(x_i)$.

For $K\geq0$, let $\VK$ denote the vector subspace of $V=L[x_1,\ldots,x_n]$ spanned by Lie
monomials with $\leq{}K$ brackets. Since $L[x_1,\ldots,x_n]$ is free and the Jacobi relation
preserves the number of brackets, there is a well-defined projection map $V\longrightarrow\VK$
annihilating all Lie monomials with $>K$ brackets. Let $\TnK$ denote the truncation of $T_n$
to $\VK$, so that the truncation of $x_i^k$ to $\VK$ is $(\TnK)^k(x_i)$. To understand what
happens in the limit $k\longrightarrow\infty$, we need to investigate the eigenvalues of
$\TnK$.

A basis for $\VK$ can be found which is a subset of the (finite) set of all Lie
monomials in $x_1,\ldots,x_n$ with $\leq{}K$ brackets. With respect to this basis, the matrix
of $\TnK$ will be block lower triangular, the blocks being determined by the numbers of
brackets, since under substitution in a monomial with $r$  brackets, all terms will have at
least $r$  brackets. The blocks on the diagonal in this matrix come from that part of the
substitution which retains the same number of brackets, that is
\[
x_i\rightarrow\tfrac1{n-1}(x_1+\ldots+\widehat{x_i}+\ldots+x_n),\quad1\leq{}i\leq{}n\eqno{(2)}
\]
In particular, the (0,0) block will be the action on terms with no brackets, that is on the
span of $x_1,\ldots,x_n$, and will be the matrix induced by (2), namely an $n\times{}n$
matrix all of whose entries are $\frac1{n-1}$ except for zeroes on the diagonal. This matrix
is diagonalizable and its eigenvalues are 1 (simple) and $-\frac1{n-1}$ (with multiplicity
$n-1$). Choose a diagonalizing basis, say $y_1=x_1+\cdots+x_n$ and $y_i=x_{i-1}-x_i$, $i=2,\ldots,n$.

\noindent The $(r,r)$ block of the matrix for $\TnK$ will be given by the action of (2) on
linear combinations of monomials with exactly $r$ brackets. Change basis to a new basis which
are monomials in $y_1,\ldots,y_n$.  The $(r,r)$ block of the matrix for $\TnK$ in this new basis will be given by the action of the substitution,
\[y_1\rightarrow{}y_1,\quad{}y_i\rightarrow-\tfrac1{n-1}y_i\quad(2\leq{}i\leq{}n)\]
Under this substitution any Lie monomial in the $y_i$ with $r$ brackets, will scale by a factor $(-\frac1{n-1})^{r+1-s}$  where $s$ is the number of times that $y_1$ appears in the monomial. Such a monomial with $r$ brackets contains $r+1$ (not necessarily distinct) $y_i$'s; so the exponent here is always non-negative, while it can only vanish if $s=r+1$, that is a monomial containing only $y_1$. Since $[y_1,y_1]=0$, such a monomial can only be a basis element if $r=0$.  In conclusion, the $(r,r)$ block of the matrix for $\TnK$ in this new basis is diagonal with entries which are powers of $(-\frac1{n-1})$, with strictly positive exponents for $r>0$. But a lower triangular block matrix whose diagonal blocks are all diagonal matrices is just a lower triangular matrix. The entries on the diagonal are all non-negative integer powers of $(-\frac1{n-1})$ and the entry 1 (exponent zero) occurs only from $y_1$ in the $(0,0)$ block. We conclude that $\TnK$ has all its eigenvalues which are non-negative integer powers of $(-\frac1{n-1})$, while the eigenvalue 1 appears without multiplicity.

\noindent While the matrix for $\TnK$  may not be diagonalizable, it can be expressed in Jordan block form and the diagonal entries of those blocks must be the eigenvalues, that is there is a single size one Jordan block with eigenvalue 1 and the remaining Jordan blocks all have eigenvalues which are positive powers of $(-\frac1{n-1})$ with varying multiplicities and block sizes. The powers of any Jordan block with eigenvalue $\lambda$, $|\lambda|<1$, converge to zero, and so in a basis with respect to which $\TnK$  is in Jordan normal form (say with the Jordan block of eigenvalue 1 first),  $(\TnK)^k$ converges as $k\rightarrow\infty$  to a matrix identically zero except for the (1,1) position which is 1.  Denote the first basis element by $v_1\in\VK$; it is an eigenvector of $\TnK$ of eigenvalue 1. It depends on $n$, but we omit this dependence for ease of notation. The conclusion is that for any $x\in\VK$,
\[(\TnK)^kx\longrightarrow{}av_1\quad\hbox{as $k\rightarrow\infty$}\]
where $a$ is the first component of $x$ in the new basis (the coefficient of $v_1$).

\noindent Applying this for $x=x_i$, we obtain that the truncation of $x_i^k$ to $\VK$, has a limit  as $k\rightarrow\infty$, and this limit is a multiple of the same eigenvector $v_1$, say $a_iv_1$, where $a_i$ is the first component of $x_i$ with respect to the new basis. In order to find $a_i$, it suffices to truncate to the first block $V^{[0]}$ on which we know that $T_n^{[0]}$ is diagonal with respect to the basis $\{y_i\}$, and the eigenvector with eigenvalue 1 is $y_1$. However,
\[x_1=\tfrac1ny_1+\tfrac1n\sum\limits_{j=2}^n(n+1-j)y_j,\quad{}x_i=x_1-\sum\limits_{j=2}^iy_j\]
and so the coefficient of $y_1$ in any $x_i$ is $\frac1n$. Thus $a_i=\frac1n$ for all $i$ and so the truncations to $\leq{}K$ Lie brackets of the sequences $x_i^k$ have a common limit as $k\rightarrow\infty$, whose zero bracket part is $\frac1ny_1$, for all $k\in\N$. Hence also the sequences $x_i^k$ themselves have a common limit, which we denote by $\mu_n(x_1,\ldots,x_n)$ and whose zero bracket part is $\frac1n(x_1+\cdots+x_n)$.

\noindent Finally, inductively we see that $\mu_n$ is totally symmetric under interchange of the $x_i$'s. This holds for $n=2$ by section 3. Assuming $\mu_{n-1}$ is symmetric (for some $n\geq3$), the symmetry of the construction of the sequences $x_i^k$ means that their limit must also be symmetric. To be more precise, any permutation of $x_1,\ldots,x_n$ will induce an identical permutation of $x_1^k,\ldots,x_n^k$, for each $k$, and so knowing that they share a common limit, it must be invariant under the symmetric group.
\end{proof}

\begin{proof} {\bf of Theorem 4.2:}
Using the construction of $\mu_n$ in Lemma 4.1, we prove inductively that
\[(u_{x_i}(a)=b \hbox{\ for $1\leq{}i\leq{}n$}) \implies u_{\mu_n(x_1,\ldots,x_n)}(a)=b\]
For $n=2$ it is known by \S3. Assume it is true for $n-1$, some $n\geq3$. Suppose that $x_1,\ldots,x_n\in\glie$, $a,b\in{}X$ are such that $u_{x_i}(a)=b$ for all $i=1,\ldots,n$. In the notation of Lemma~4.1, we see by induction on $k$ (starting from $k=0$ which is the initial assumption) that $u_{x_i^k}(a)=b$ for $i=1,\ldots,n$ and all $k\in\N$. Since $u_x(a)$ depends continuously on $x$ and $x_i^k\longrightarrow\mu_n(x_1,\ldots,x_n)$ as $k\rightarrow\infty$, the inductive step follows.
\end{proof}

In the course of the proof of Lemma~4.1, we showed that at every truncation $K$, the operator $\TnK$ has exactly one eigenvalue 1 with all the rest of modulus strictly less than 1, while the iteration converges to a multiple of this eigenvector, that is to a fixed point of $\TnK$. It follows that $\mu_n$ itself is a fixed point of $T_n$, and that this condition determines $\mu_n$ up to scaling.

\begin{lemma}
$\mu_n(x_1,\ldots,x_n)$ is defined uniquely up to scaling, by the property
\begin{align*}
\mu_n\bigg(\mu_{n-1}(\widehat{x_1},\ldots,x_n),&\ldots,\mu_{n-1}(x_1,\ldots,\widehat{x_i},\ldots,x_n),
\ldots,\mu_{n-1}(x_1,\ldots,\widehat{x_n})\bigg)\\
&=\mu_n(x_1,\ldots,x_n)
\end{align*}
\end{lemma}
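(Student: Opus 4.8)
The plan is to recognise the displayed identity as the assertion that $\mu_n$ is a fixed point of the operator $T_n$ introduced in the proof of Lemma~4.1, and then to read off uniqueness up to scaling directly from the spectral analysis already carried out there. First I would observe that $T_n$, defined by the substitution $x_i\mapsto\mu_{n-1}(x_1,\ldots,\widehat{x_i},\ldots,x_n)$, is not merely linear but is the Lie algebra endomorphism of $L[x_1,\ldots,x_n]$ determined by these images on the generators. By the universal property of the free Lie algebra, substitution respects brackets, so applying $T_n$ to any element $w(x_1,\ldots,x_n)$ evaluates $w$ at the substituted arguments; in particular $T_n(\mu_n)$ is exactly the left-hand side of the displayed equation. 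Thus the claimed identity is equivalent to the fixed-point equation $T_n(\mu_n)=\mu_n$.

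Next I would verify this fixed-point equation using the limit description of $\mu_n$. Recall from Lemma~4.1 that $\mu_n=\lim_{k\to\infty}(T_n)^k(x_i)$, the limit being taken truncation level by truncation level, and that on each $\VK$ the truncation $\TnK$ is a finite-dimensional, hence continuous, linear map. Applying $\TnK$ to the convergent sequence $(\TnK)^k(x_i)$ and using continuity gives $\TnK(\mu_n^{[\leq K]})=\lim_k(\TnK)^{k+1}(x_i)=\mu_n^{[\leq K]}$, the right-hand side being the same limiting sequence shifted by one index. Since this holds for every $K\in\N$, we conclude $T_n(\mu_n)=\mu_n$ in the full (completed) free Lie algebra.

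For uniqueness up to scaling I would invoke the conclusion of Lemma~4.1 that for every $K$ the operator $\TnK$ has the eigenvalue $1$ with multiplicity exactly one, arising from a single size-one Jordan block, so that its fixed subspace is the one-dimensional line spanned by $v_1=\mu_n^{[\leq K]}$. Consequently any $w$ with $T_n(w)=w$ has truncation $w^{[\leq K]}=c_K\,\mu_n^{[\leq K]}$ for some scalar $c_K$. It then remains to check consistency of these scalars across truncation levels: since projection from $V^{[\leq K+1]}$ to $\VK$ sends $\mu_n^{[\leq K+1]}$ to $\mu_n^{[\leq K]}$ and $w^{[\leq K+1]}$ to $w^{[\leq K]}$, we obtain $c_{K+1}\,\mu_n^{[\leq K]}=c_K\,\mu_n^{[\leq K]}$; because $\mu_n^{[\leq K]}\neq0$ (its zero-bracket part $\frac1n(x_1+\cdots+x_n)$ is nonzero), all the $c_K$ agree with a single scalar $c$, whence $w=c\,\mu_n$.

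The main obstacle is not in the architecture of the argument, since the genuine spectral work is already complete in Lemma~4.1; rather it lies in handling everything level by level in the truncations and then reassembling it consistently in the completed free Lie algebra. The one point deserving care is the compatibility of the scalars $c_K$, which relies on the nonvanishing of each $\mu_n^{[\leq K]}$, and on the observation that the fixed-point condition, the one-dimensionality of the $1$-eigenspace, and the projections $V\to\VK$ all interact coherently across levels.
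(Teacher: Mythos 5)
Your proposal is correct and follows essentially the same route as the paper, which likewise observes (immediately after the proof of Lemma~4.1) that the displayed identity is the fixed-point equation $T_n(\mu_n)=\mu_n$ and that the simple eigenvalue $1$ of each truncation $\TnK$, with all other eigenvalues of modulus strictly less than $1$, pins down the fixed point up to scaling. Your write-up merely makes explicit two steps the paper leaves implicit --- the continuity argument giving $\TnK(\mu_n^{[\leq K]})=\mu_n^{[\leq K]}$ and the compatibility of the scalars $c_K$ across truncation levels --- both of which are sound.
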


It is this fact which we use to compute the first few coefficients in an explicit expansion of $\mu_n$ in the next section.

\begin{lemma}
$\mu_n(-x_1,\ldots,-x_n)=-\mu_n(x_1,\ldots,x_n)$
\end{lemma}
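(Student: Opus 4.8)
The plan is to use induction on $n$, leveraging the iterative construction of $\mu_n$ from Lemma~4.1 together with the observation that negation interacts compatibly with that iteration. The base case $n=2$ is precisely Lemma~3.3(i). So I would assume, for some $n\geq3$, that $\mu_{n-1}$ is odd, that is $\mu_{n-1}(-y_1,\ldots,-y_{n-1})=-\mu_{n-1}(y_1,\ldots,y_{n-1})$, and aim to deduce the same identity for $\mu_n$.

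Write $x_i^k$ for the sequences of Lemma~4.1 built from the initial data $x_1,\ldots,x_n$, and let $\tilde{x}_i^k$ denote the analogous sequences built from the negated initial data $-x_1,\ldots,-x_n$, so that $\tilde{x}_i^0=-x_i$. The key claim is that $\tilde{x}_i^k=-x_i^k$ for every $k\in\N$ and every $i$, which I would establish by an inner induction on $k$. The base case $k=0$ is immediate. For the inductive step, substituting $\tilde{x}_j^k=-x_j^k$ into the defining recursion (1) gives
\[\tilde{x}_i^{k+1}=\mu_{n-1}(-x_1^k,\ldots,\widehat{-x_i^k},\ldots,-x_n^k),\]
and applying the outer inductive hypothesis (the oddness of $\mu_{n-1}$) converts the right-hand side into $-\mu_{n-1}(x_1^k,\ldots,\widehat{x_i^k},\ldots,x_n^k)=-x_i^{k+1}$, as required.

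With the claim in hand, I would pass to the limit $k\to\infty$. By Lemma~4.1 the sequences $x_i^k$ converge (truncation by truncation in each $\VK$) to $\mu_n(x_1,\ldots,x_n)$, while the sequences $\tilde{x}_i^k$ converge to $\mu_n(-x_1,\ldots,-x_n)$. Since negation is a linear map preserving each truncation space $\VK$, it commutes with these truncated limits, so the identity $\tilde{x}_i^k=-x_i^k$ survives into the limit and yields $\mu_n(-x_1,\ldots,-x_n)=-\mu_n(x_1,\ldots,x_n)$, completing the induction.

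There is essentially no serious obstacle here; the only point demanding mild care is the interchange of negation with the limit, but this is harmless precisely because the convergence in Lemma~4.1 takes place in each finite-dimensional truncation $\VK$ and negation is linear there. An alternative route would invoke the fixed-point characterisation of Lemma~4.4: one would check that $-\mu_n(x_1,\ldots,x_n)$ satisfies the same $T_n$-fixed-point equation at the arguments $-x_i$ that $\mu_n(-x_1,\ldots,-x_n)$ does, then use uniqueness up to scaling together with the matching zero-bracket part $-\frac1n(x_1+\cdots+x_n)$ to conclude equality. That argument rests on the same oddness of $\mu_{n-1}$, however, so the direct iterative approach above seems cleaner.
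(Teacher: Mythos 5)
Your proof is correct and takes exactly the paper's route: the paper's entire proof reads that the statement ``follows by induction from Lemma 3.6(i) and Lemma 4.1'', which is precisely the outer induction on $n$ (base case the oddness of $\mu_2$), the inner induction on $k$ giving $\tilde{x}_i^k=-x_i^k$, and the passage to the limit in each finite-dimensional truncation $V^{[\leq K]}$ that you spell out. The only quibble is a label: the base case is the paper's Lemma~3.6(i), not 3.3(i).
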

\begin{proof}
This follows by induction from Lemma 3.6(i) and Lemma 4.1.
\end{proof}

As a corollary, the expansion of $\mu_n(x_1,\ldots,x_n)$ will involve only odd numbers of
symbols and therefore even numbers of brackets.

\section{Expansion of $\mu_n$ in Lie brackets}

In this section we obtain formula for the expansion of $\mu_n(x_1,\ldots,x_n)$ in Lie brackets, up to the third order. From Lemma 4.3, only even orders are present in $\mu_n$.
The zeroth order is the average $\mu_n^{[0]}=\frac1n(x_1+\cdots+x_n)$ by Lemma 4.1.

\noindent Let $\Vr$ denote the piece of the free Lie algebra $L[x_1,\ldots,x_n]$ spanned by Lie monomials with exactly $r$ brackets.  In order to obtain a spaning set, it is sufficient to enumerate Lie monomials of the form $[x_{i_1},[x_{i_2},\cdots,[x_{i_r},x_{i_{r+1}}]\cdots]]$.

\noindent The symmetric group $S_n$ permutes the $x_i$'s and thus acts on $\Vr$; denote the invariant subspace under this action by $\Vbr$. It is spanned by the images of Lie monomials with exactly $r$ brackets under the symmetrization map $\b$ defined by $\bar{w}\equiv\frac1{n!}\sum\limits_{\sigma\in{}S_n}\sigma(w)$.

\noindent{\bf Second order:} The space $\overline{V}^{[2]}$ is spanned by the symmetrizations of the different types of Lie monomials with two brackets $[x_i,[x_j,x_k]]$, where the type is determined by coincidences or otherwise in the list $i,j,k$. Thus there are {\it prima facie} two generators, coming from symmetrizations of
\[
[x_i,[x_j,x_k]], [x_i,[x_i,x_j]]
\]
where in this list the indices ($i,j,k$) are all considered distinct. However, on interchange of $j$ and $k$ the first expression changes sign and so its symmetrization vanishes. Thus $\overline{V}^{[2]}$ is one-dimensional, generated by
\[
v^{[2]}\equiv\sum\limits_{i,j,i\not=j}[x_i,[x_i,x_j]]=\sum\limits_{i,j,i<j}[x_i-x_j,[x_i,x_j]]
\]
In particular, $\mu_n^{[2]}$ is some multiple of this element, so that
\[
\mu_n^{[\leq2]}=\tfrac1n\sum\limits_ix_i+c_n\sum\limits_{i,j,i\not=j}[x_i,[x_i,x_j]]
\]
for some scalar $c_n$. To find $c_n$, use the defining property of $\mu_n$ from Lemma 4.2. The part with at most two Lie brackets in the left hand side of the equality in Lemma 4.2 is
\begin{align*}
&\tfrac1n\sum\limits_i\Big(\tfrac1{n-1}\sum\limits_{j\not=i}x_j+c_{n-1}\sum\limits_{j\not=i,k\not=i {\rm \ distinct}}[x_j,[x_j,x_k]]\Big)\\
&\quad+c_n\sum\limits_{i,j,i\not=j}\Bigg[\tfrac1{n-1}\sum\limits_{k\not=i}x_k,
   \Big[\tfrac1{n-1}\sum\limits_{l\not=i}x_l,\tfrac1{n-1}\sum\limits_{m\not=j}x_m\Big]\Bigg]\\
&=\tfrac1{n(n-1)}\sum\limits_{i,j\not=i}x_j
+\tfrac{c_{n-1}}n\sum\limits_{i,j,k{\rm \ distinct}}[x_j,[x_j,x_k]]\\
&\quad+\tfrac{c_n}{(n-1)^3}\sum\limits_{i,j,k,l,m,i\not=j,k,l, j\not=m}[x_k,[x_l,x_m]]\\
&=\tfrac1{n(n-1)}\sum\limits_{j}\Big(\sum\limits_{i\not=j}1\Big)x_j
+\tfrac{c_{n-1}}n\sum\limits_{j,k{\rm \ distinct}}\Big(\sum\limits_{i\not=j,k}1\Big)[x_j,[x_j,x_k]]\\
&\quad+\tfrac{c_n}{(n-1)^3}\sum\limits_{k,l,m}\Big(\sum\limits_{i,j{\rm \ distinct},i\not=k,l,j\not=m}1\Big)[x_k,[x_l,x_m]]
\end{align*}
However the set of $i,j$ distinct with $i\not=k,l$ and $j\not=m$ has order
\begin{align*}
(n-3)(n-2)+(n-1)\quad&\hbox{for $k,l,m$ distinct}\\
(n-2)^2+(n-1)\quad&\hbox{for $k=l\not=m$}\\
(n-2)^2\quad&\hbox{for $k=m\not=l$}\\
(n-2)^2\quad&\hbox{for $l=m\not=k$}\\
(n-1)(n-2)\quad&\hbox{for $k=l=m$}\\
\end{align*}
However the sum of $[x_k,[x_l,x_m]]$ over $k,l,m$ with the various conditions in the five cases above, vanishes in the first, fourth and fifth cases (because of antisymmetry of the expression and symmetry of the condition under interchanging $l$ and $m$). In the second and third cases, these sums are,
\[
\sum\limits_{k,m{\rm \ distinct}}[x_k,[x_k,x_m]],\quad
\sum\limits_{k,l{\rm \ distinct}}[x_k,[x_l,x_k]]
\]
respectively, which are identical except for a sign. The conclusion is that
\[
\sum\limits_{k,l,m}\Big(\sum\limits_{i,j{\rm \ distinct},i\not=k,l,j\not=m}1\Big)[x_k,[x_l,x_m]]
=(n-1)\sum\limits_{k,m{\rm \ distinct}}[x_k,[x_k,x_m]]
\]
and thus the $\leq2$ Lie bracket part of the LHS of Lemma 4.2 simplifies to
\[
\tfrac1n\sum\limits_jx_j+\tfrac{n-2}nc_{n-1}\sum\limits_{j,k{\rm \ distinct}}[x_j,[x_j,x_k]]
+\tfrac{c_n}{(n-1)^2}\sum\limits_{k,m{\rm \ distinct}}[x_k,[x_k,x_m]]
\]
Identifying this with $\mu_n^{[\leq2]}$ leaves
\[c_n=\tfrac{n-2}nc_{n-1}+\tfrac{c_n}{(n-1)^2}\]
which simplifies to $n^2c_n=(n-1)^2c_{n-1}$ for $n>2$. This implies that $n^2c_n$ is independent of $n$ and thus shares the value at $n=2$ which is $-\frac1{12}$, since $c_2=-\frac1{48}$ by \S3. Hence $c_n=-\frac1{12n^2}$ and
\[
\fbox{$\mu_n^{[\leq2]}=\tfrac1n\sum\limits_ix_i-\tfrac1{12n^2}\sum\limits_{i,j,i\not=j}[x_i,[x_i,x_j]]$}
\]

\section{Properties of $\mu_n$}

The core properties of $\mu_n$ used to identify its first few coefficients in \S5 are that (i) it is totally symmetric (Lemma 4.1), (ii) it involves only even numbers of Lie brackets (Lemma 4.3), (iii) its zeroth order part is the usual average (Lemma 4.1), and (iv) it is invariant under the substitution map $T_n$ of \S4 (Lemma 4.2). These uniquely determine $\mu_n$. In addition there are two properties which follow inductively from Lemma 3.6(ii),(iii) and the construction of $\mu_n$ as the limit of a sequence.

\begin{lemma}
\begin{itemize}
\item[]
\item[(i)] $\mu_n(\BCH(z,x_1),\ldots,\BCH(z,x_n))=\BCH(z,\mu_n(x_1,\ldots,x_n))$
\item[(ii)] $\mu_n(\BCH(x_1,z),\ldots,\BCH(x_n,z))=\BCH(\mu_n(x_1,\ldots,x_n),z)$
\end{itemize}
\end{lemma}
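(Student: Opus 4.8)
The plan is to prove (i) and (ii) simultaneously by induction on $n$, the base case $n=2$ being exactly Lemma 3.6(ii) and (iii). Throughout I will use the construction of $\mu_n$ from Lemma 4.1: the iterated sequences defined by $x_i^0=x_i$ and $x_i^{k+1}=\mu_{n-1}(x_1^k,\ldots,\widehat{x_i^k},\ldots,x_n^k)$ converge, in the sense of truncation by number of brackets, to $\mu_n(x_1,\ldots,x_n)$. For the inductive step I assume that both (i) and (ii) hold for $\mu_{n-1}$.

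For part (i), let $\phi$ be the Lie algebra endomorphism of the completed free Lie algebra $L[z,x_1,\ldots,x_n]$ determined on generators by $x_j\mapsto\BCH(z,x_j)$ (and fixing $z$). By the very definition of substitution into the universal expression $\mu_n$, the left-hand side of (i) is $\phi\big(\mu_n(x_1,\ldots,x_n)\big)$. The key step is to show, by a second induction on $k$, that $\phi(x_i^k)=\BCH(z,x_i^k)$ for every $k$. The case $k=0$ is immediate since $\phi(x_i)=\BCH(z,x_i)$. For the inductive step, I apply $\phi$ to the defining relation; because $\phi$ is a continuous homomorphism it commutes with the universal Lie series $\mu_{n-1}$, giving
\[\phi(x_i^{k+1})=\mu_{n-1}\big(\phi(x_1^k),\ldots,\widehat{\phi(x_i^k)},\ldots,\phi(x_n^k)\big).\]
Substituting the $k$-inductive hypothesis $\phi(x_j^k)=\BCH(z,x_j^k)$ and then invoking the $n$-inductive hypothesis — namely (i) for $\mu_{n-1}$ — pulls $\BCH(z,\cdot)$ outside, producing $\BCH\big(z,\mu_{n-1}(x_1^k,\ldots,\widehat{x_i^k},\ldots,x_n^k)\big)=\BCH(z,x_i^{k+1})$, as required. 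It then remains to pass to the limit $k\to\infty$:
\[\mu_n(\BCH(z,x_1),\ldots,\BCH(z,x_n))=\phi\big(\textstyle\lim_k x_i^k\big)=\lim_k\phi(x_i^k)=\lim_k\BCH(z,x_i^k)=\BCH\big(z,\mu_n(x_1,\ldots,x_n)\big).\]
Part (ii) is proved identically, replacing $\phi$ by $x_j\mapsto\BCH(x_j,z)$ and appealing to Lemma 3.6(iii) and to (ii) for $\mu_{n-1}$.

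The step I expect to require the most care is the passage to the limit, that is, justifying that both $\phi$ and the map $w\mapsto\BCH(z,w)$ are continuous for the filtration by number of brackets, so that they commute with the limits supplied by Lemma 4.1. This holds because neither operation decreases the bracket count: the substitution $x_j\mapsto\BCH(z,x_j)$ sends a monomial with $r$ brackets to a series all of whose terms carry at least $r$ brackets (the leading contribution coming from the zero-bracket parts $z+x_j$), and the bracket terms appearing in $\BCH(z,w)$ can only raise the bracket count of $w$. Consequently the truncation of $\phi(w)$, respectively of $\BCH(z,w)$, to $\leq{}K$ brackets is determined by the truncation of $w$ to $\leq{}K$ brackets, which is precisely the continuity needed to interchange these operations with the limit over $k$; the analogous statement for $w\mapsto\BCH(w,z)$ handles part (ii).
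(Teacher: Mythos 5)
Your proof is correct and follows exactly the route the paper intends: the paper states Lemma 6.1 with only the remark that the two properties ``follow inductively from Lemma 3.6(ii),(iii) and the construction of $\mu_n$ as the limit of a sequence,'' and your argument --- induction on $n$ with base case Lemma 3.6(ii),(iii), propagating $\BCH(z,\cdot)$ through the iterates $x_i^k$ of Lemma 4.1 via the inductive hypothesis for $\mu_{n-1}$, then passing to the limit --- is precisely that sketch carried out in full. Your justification that the substitution $\phi$ and the map $w\mapsto\BCH(z,w)$ never decrease bracket count, so that both commute with limits in the bracket-filtration topology, correctly supplies the continuity the paper leaves implicit.
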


\noindent{\bf Non-uniqueness of $\mu_n$:} The expressions $\mu_n$ for $n>2$ are not uniquely determined by the universality condition in Theorem 3.2 (without the condition of Lemma 4.2). Indeed,
\[x=\BCH(y,\mu_n(x_1,\ldots,x_n))\]
will also satisfy the conditions, for any $y$ totally symmetric in the $x_i$'s for which $u_y(a)=a$ necessarily follows from $u_{x_i}(a)=b$ for all $i$. If it is additionally known that the set $V_a$ of $y$ satisfying $u_y(a)=a$ is a vector space, then the fact that it is closed under $\BCH$ while $\BCH(x_i,-x_1)\in{}V_a$ ensures that $V_a$ is closed under Lie bracket and the non-uniqueness follows from the following lemma.

\begin{lemma}
The $S_n$-invariant part of the sub-Lie algebra of $L[x_1,\ldots,x_n]$ generated by $\BCH(x_i,-x_1)$, $i=2,\ldots,n$, is non-trivial for $n>2$.
\end{lemma}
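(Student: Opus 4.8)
The plan is to pass to leading terms and reduce the statement to a character computation in the free Lie algebra on the standard representation. Throughout I work in the degree-completed free Lie algebra $\hat L$ and let $\mathcal A$ denote the closed sub-Lie algebra generated by the elements $g_i=\BCH(x_i,-x_1)$, $i=2,\dots,n$; the invariant part in question is $\mathcal A\cap\hat L^{S_n}$. First I would check that $\mathcal A$ is an $S_n$-submodule. The subgroup $S_{n-1}$ fixing the index $1$ permutes the generators $g_i$, so preserves $\mathcal A$; and for the transposition $(1\,2)$ one has, by 2.1(c),(d), $(1\,2)\cdot g_2=\BCH(x_1,-x_2)=-g_2$ and, for $i\ge3$, $(1\,2)\cdot g_i=\BCH(x_i,-x_2)=\BCH(g_i,-g_2)$, both of which lie in $\mathcal A$ (the latter because $\BCH$ of elements of $\mathcal A$ is again in the closed subalgebra $\mathcal A$). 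Since $(1\,2)$ and $S_{n-1}$ generate $S_n$, the algebra $\mathcal A$ is $S_n$-stable.

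Next I would analyse leading terms. The degree filtration of $\hat L$ restricts to $\mathcal A$, and the associated graded $\mathrm{gr}\,\mathcal A$ is a graded Lie subalgebra of $L$ containing the leading terms $e_i=x_i-x_1$ of the $g_i$. By the Shirshov--Witt theorem the $e_i$ generate a free Lie subalgebra $L'$, and since $\{e_i\}$ spans the ($S_n$-stable) standard representation $\mathrm{std}=\{\sum a_ix_i:\sum a_i=0\}$, there is an $S_n$-equivariant graded isomorphism $L'\cong L(\mathrm{std})$, the free Lie algebra on the standard representation. Thus $L(\mathrm{std})\cong L'\subseteq\mathrm{gr}\,\mathcal A$. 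I claim it now suffices to find one degree $d$ with $L_d(\mathrm{std})^{S_n}\ne0$: choosing a nonzero invariant $\xi$ there and using $L'_d\subseteq\mathrm{gr}_d\mathcal A$, one lifts $\xi$ to an element $w\in\mathcal A$ having no homogeneous components below degree $d$ and with degree-$d$ component $\xi$. Because the grading is $S_n$-invariant and $\xi$ is invariant, the averaged element $\tfrac1{n!}\sum_{\sigma}\sigma(w)$ lies in $\mathcal A$, is $S_n$-invariant, and has leading term $\tfrac1{n!}\sum_\sigma\sigma(\xi)=\xi\ne0$, hence is a nonzero element of the invariant part.

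The core is then the representation theory of $L(\mathrm{std})$. Using the graded Witt/necklace character formula $\mathrm{tr}(\sigma\mid L_d(\mathrm{std}))=\tfrac1d\sum_{e\mid d}\mathrm m(e)\,\chi(\sigma^e)^{d/e}$, where $\mathrm m$ is the Möbius function and $\chi(\sigma)=\mathrm{fix}(\sigma)-1$ is the standard character, the multiplicity of the trivial representation is $\dim L_d(\mathrm{std})^{S_n}=\tfrac1{n!}\sum_\sigma\mathrm{tr}(\sigma\mid L_d(\mathrm{std}))$. I would compute this for small $d$: averaging fixed-point moments shows it vanishes for $d=1,2,3,4$, but for $d=5$, where the formula reads $\tfrac15(\chi(\sigma)^5-\chi(\sigma^5))$, it equals $\tfrac15\big(E[(\mathrm{fix}-1)^5]-E[\mathrm{fix}(\sigma^5)-1]\big)$. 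Evaluating with the fixed-point moments (Bell numbers) gives $\tfrac15(11-1)=2$ for $n\ge5$, while a direct count over conjugacy classes gives $1$ for $n=3$ and $2$ for $n=4$. In every case $n>2$ this is strictly positive, so $L_5(\mathrm{std})^{S_n}\ne0$ and the lemma follows. For $n=2$ the space $\mathrm{std}$ is one-dimensional and $L(\mathrm{std})$ abelian, with no invariants beyond the vanishing degree-$1$ part, which is exactly why the hypothesis $n>2$ is needed.

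The step I expect to be the main obstacle is precisely that the invariant only appears in degree $5$: the naive attempt to produce an invariant by symmetrising a low bracket of the $g_i$ fails, since the leading symmetrisations in degrees $2,3,4$ all land in one-dimensional (or zero) isotypic pieces that turn out to cancel; for instance $\overline{[g_2,[g_2,g_3]]}$ vanishes to leading order in $\overline V^{[2]}$. This is what forces the systematic character computation rather than an explicit bracket. A secondary technical point to handle with care is the passage to the completion and the verification that taking leading terms is compatible with the $S_n$-action, so that the inclusion $L'\subseteq\mathrm{gr}\,\mathcal A$ and the lift-and-average argument are legitimate.
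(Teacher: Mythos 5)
Your proposal cannot be compared against an in-paper argument, because the paper states this lemma bare, with no proof (it is presumably deferred to \cite{Si}); so your write-up is a self-contained route, and on checking it I find it correct. The $S_n$-stability of $\mathcal{A}$ is right: $(1\,2)\cdot g_2=\BCH(x_1,-x_2)=-g_2$ by 2.1(d), and $(1\,2)\cdot g_i=\BCH(x_i,-x_2)=\BCH(g_i,-g_2)$ by associativity (indeed $\BCH(x_i,-x_j)=\BCH(g_i,-g_j)\in\mathcal{A}$ for all $i\neq j$, which makes the generating set visibly $S_n$-stable) --- granted your reading of ``sub-Lie algebra'' as the closed subalgebra of the degree completion, a reading that is anyway forced since the generators $\BCH(x_i,-x_1)$ are themselves infinite series, and one consistent with how the lemma is used in \S6, where the ambient $V_a$ is closed under $\BCH$. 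The passage to leading terms is also sound: $\mathrm{gr}\,\mathcal{A}$ is a graded subalgebra of $L$ containing the $e_i=x_i-x_1$, and the subalgebra these generate is canonically the free Lie algebra on the span $\mathrm{std}$ (for this one needs only the elementary fact that a subspace of the degree-one part generates a free subalgebra, not the full Shirshov--Witt theorem), equivariantly since $\mathrm{std}$ is $S_n$-stable; the lift-and-average step then works because $\xi\in\mathrm{gr}_d\mathcal{A}$ means by definition that some $w\in\mathcal{A}$ has all components in degrees $\geq d$ with degree-$d$ component $\xi$, and averaging preserves both $\mathcal{A}$ and that leading term. I verified the character arithmetic: with $\chi=\mathrm{fix}-1$ the Lie character $\frac1d\sum_{e\mid d}\mu(e)\chi(\sigma^e)^{d/e}$ gives trivial multiplicity $0$ in degrees $\leq4$ and, in degree $5$, $\frac15(11-1)=2$ for $n\geq5$ (using $E[(\mathrm{fix}-1)^5]=11$, the number of singleton-free partitions of a $5$-set, and $E[\mathrm{fix}(\sigma^5)]=2$), with the direct class-by-class counts giving $1$ for $n=3$ and $2$ for $n=4$, exactly as you claim; as a consistency check, the same formula applied to the full permutation representation in degree $3$ gives $\frac13(5-2)=1$, reproducing $\dim\overline{V}^{[2]}=1$ from \S5. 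Beyond the bare statement, your route yields quantitative information the paper does not record: the invariants of the leading-term algebra first occur in degree $5$ (four brackets), which explains why no explicit low-degree symmetrized bracket of the $g_i$ can witness the lemma, and your $n=2$ observation (where $\mathcal{A}$ is spanned by $g_2$ with $(1\,2)\cdot g_2=-g_2$) explains the hypothesis $n>2$ and meshes with the uniqueness of $\mu_2$ asserted in \S3. Two small points to tighten in a final write-up: the vanishing for $d\leq4$ at $n=3,4$ requires the direct evaluation over conjugacy classes rather than the Bell-number moments (those identities need $n\geq d$) --- harmless, since these vanishings are not load-bearing for the lemma; and you should note that an invariant element of $\mathcal{A}$ produced this way lies in the completion, which is the correct home for the non-uniqueness discussion since the competing expressions $\BCH(y,\mu_n(x_1,\ldots,x_n))$ are likewise formal series.
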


A generalization of Lemma 4.2 is the following.

\begin{conjecture} For $1\leq{}m\leq{}n$,
\[\mu_{\binom{n}{m}}\big(\mu_m(x_S)\big|S\subset\{1,\ldots,n\},|S|=m\big)=\mu_n(x_1,\ldots,x_n)
\eqno{(3)}\]
\end{conjecture}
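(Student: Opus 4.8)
The plan is to reduce the identity (3) to the single structural fact, established inside the proof of Lemma~4.1, that at every truncation the operator $\TnK$ has the eigenvalue $1$ with multiplicity one; consequently the $T_n$-fixed points of $V=L[x_1,\ldots,x_n]$ are exactly the scalar multiples of $\mu_n$. Write $N=\binom nm$ and let $\nu$ denote the left-hand side of (3). If I can show that (i) the bracket-free part of $\nu$ is $\frac1n(x_1+\cdots+x_n)$ and (ii) $\nu$ is invariant under the Lie endomorphism $T_n$ of \S4, then $\nu=c\,\mu_n$ for a scalar $c$, and comparison of the bracket-free parts forces $c=1$, giving $\nu=\mu_n$. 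This is the cleanest finish, since by the non-uniqueness remark the mere fact that $\nu$ solves the universal averaging problem (which follows by applying Theorem~3.2 first to each $\mu_m(x_S)$ and then to $\mu_N$) does not determine $\nu$.

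Step (i) is a direct count. The bracket-free part of $\mu_m(x_S)$ is $\frac1m\sum_{i\in S}x_i$, so that of $\nu$ is $\frac1{Nm}\sum_{|S|=m}\sum_{i\in S}x_i=\frac1{Nm}\binom{n-1}{m-1}\sum_i x_i=\frac1n\sum_i x_i$, using $\binom{n-1}{m-1}=\frac mn\binom nm$. As corroboration (not logically needed) one checks that $\nu$ is totally symmetric, since a permutation of the $x_i$ permutes the subset-averages $\mu_m(x_S)$ and $\mu_N$ is symmetric, and that $\nu$ is odd under $x_i\mapsto-x_i$ by Lemma~4.3 applied to $\mu_m$ and then to $\mu_N$.

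Step (ii) is the crux, and I would attack it by induction on $n$ with $m$ fixed, the cases $n=m$ and $n=m+1$ (the latter being Lemma~4.2) serving as base cases. Let $\Psi\colon L[z_S\mid |S|=m]\to V$ be the Lie-algebra homomorphism sending each generator $z_S$ to $\mu_m(x_S)$, so that $\nu=\Psi(\mu_N)$. Because $T_n$ is a Lie homomorphism and each $\mu_k$ is a universal bracket polynomial, $T_n$ commutes with the formation of averages, and by the inductive hypothesis for $n-1$ variables one has $\mu_{n-1}(\widehat{x_i})=\Psi\big(\mu_{\binom{n-1}m}(z_{S'}\mid i\notin S')\big)$. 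Combining these, $T_n\circ\Psi=\Psi\circ T'$, where $T'$ is the Lie endomorphism of $L[z_S]$ given on generators by $z_S\mapsto\mu_m\big(\mu_{\binom{n-1}m}(z_{S'}\mid i\notin S')\mid i\in S\big)$. Hence $T_n(\nu)=\Psi\big(T'(\mu_N)\big)$, and it suffices to prove the fixed-point relation $T'(\mu_N)=\mu_N$.

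The main obstacle is precisely this last relation. Encouragingly, a count analogous to Step~(i) shows that $\mu_N^{[0]}=\frac1N\sum_S z_S$ is already fixed by the bracket-free part of $T'$: the coefficient of $z_{S'}$ in $T'(z_S)$ at that level is $\frac{|S\setminus S'|}{m\binom{n-1}m}$, and $\sum_{|S|=m}|S\setminus S'|=m\binom{n-1}m$ by Pascal's rule, so ${T'}^{[0]}\mu_N^{[0]}=\mu_N^{[0]}$. To upgrade this to all orders I would try to show either that $T'$ commutes with the operator $T_N$ whose unique normalized fixed point is $\mu_N$, whence $T'(\mu_N)$ would again be $T_N$-fixed, hence a multiple of $\mu_N$ pinned down by its bracket-free part, or, more ambitiously, that $T'$ inherits from $T_N$ the spectral feature of Lemma~4.1 with eigenvector $\mu_N$. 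Either route, however, requires a genuine ``nested averages collapse to flat averages'' associativity for the universal means that is not available from Lemmas~3.6, 4.1, 4.2 and 6.1 alone; note also that the induction as set up replaces an $n$-variable problem by a fixed-point problem in $\binom nm$ variables, so it cannot close by size alone. I therefore expect the essential difficulty to be establishing this compatibility between the subset-averaging homomorphism $\Psi$ and the contraction operators $T_n$, equivalently the commutation $T'\circ T_N=T_N\circ T'$ or the coincidence of the two eigenvalue-$1$ eigenlines.
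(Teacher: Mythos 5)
Before assessing the attempt, note the ground truth in the paper: statement (3) is presented as a \emph{Conjecture} (6.3), and the paper contains no proof of it --- only a direct verification up to second order in brackets, carried out in \S6 using the explicit formula for $\mu_k^{[\leq2]}$ from \S5. So you are attempting something the authors leave open, and it is worth saying clearly that your proposal does not close it either. The parts you do establish are sound: the bracket-free count in Step (i) is correct (each $x_i$ lies in $\binom{n-1}{m-1}$ of the $m$-subsets, and $\binom{n-1}{m-1}=\tfrac{m}{n}\binom{n}{m}$); the reduction via Lemma 4.2 is legitimate, since the spectral argument inside the proof of Lemma 4.1 (simple eigenvalue $1$ of $\TnK$ at every truncation) does show that a $T_n$-fixed element with bracket-free part $\tfrac1n\sum_i x_i$ must equal $\mu_n$; and the intertwining $T_n\circ\Psi=\Psi\circ T'$ is correct granted the conjecture at $(n-1,m)$, because $T_n$ and $\Psi$ are filtered Lie homomorphisms and the $\mu_k$ are universal Lie series, so substitution commutes with forming them. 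Your zeroth-order computation for $T'$ is also right, resting on the identity $(n-m)\binom{n-1}{m-1}=m\binom{n-1}{m}$.

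The genuine gap --- which, to your credit, you name yourself --- is the fixed-point relation $T'(\mu_N)=\mu_N$ with $N=\binom{n}{m}$, and it is not a reduction in difficulty but a restatement of the problem: $T'$ is itself a nested-average substitution operator, and the claim that $\mu_N$ is fixed by it is a universal identity of exactly the same kind as (3), now in $\binom{n}{m}$ variables (as you observe, the induction cannot close by size). Neither fallback route is viable as stated: there is no reason the substitution operators $T'$ and $T_N$ on $L[z_S]$ should commute beyond order zero --- both fix the line through $\tfrac1N\sum_S z_S$ at the bracket-free level, but commutation at higher bracket orders would itself encode precisely the ``nested averages collapse'' being proved, so that finish is circular; and ``$T'$ inherits the spectral feature with eigenvector $\mu_N$'' is simply the goal rephrased. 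Your framework is a reasonable structural setting for the conjecture, but as evidence it currently delivers less than the paper itself: your check of $T'$ is at zeroth order only, whereas the paper verifies (3) through second order via the explicit coefficient $-\tfrac1{12n^2}$ and the combinatorial collapse of the triple sums. A sensible next step within your setup would be to verify ${T'}^{[\leq2]}\mu_N^{[\leq2]}=\mu_N^{[\leq2]}$, recovering the paper's second-order evidence; but a proof of (3) would require a new combinatorial input not present in Lemmas 3.6, 4.1, 4.2 or 6.1, and none is supplied here.
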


This is trivial for $m=1$ (setting $\mu_1$ to be the identity) and is Lemma 4.2 for $m=n-1$.

\noindent{\bf Comparison up to second order:} Using the results of \S5, up to second order the left hand side of (3) becomes
\begin{align*}
&\tfrac{1}{\binom{n}{m}}\sum\limits_{|S|=m}\Bigg(\tfrac1m\sum\limits_{j\in{}S}x_j
-\tfrac1{12m^2}\sum_{i,j\in{}S,i\not=j}[x_i,[x_i,x_j]]\Bigg)\\
&-\frac1{12{\binom{n}{m}}^2}\sum_{|S|=|T|=m,S\not=T}
\tfrac1{m^3}\Bigg[\sum\limits_{i\in{}S}x_i,
\Big[\sum\limits_{j\in{}S}x_j,\sum\limits_{k\in{}T}x_k\Big]\Bigg]\\
&=\tfrac1{m\binom{n}m}\sum\limits_j\Big(\sum\limits_{S,|S|=m,S\ni{}j}1\Big)x_j
-\tfrac{1}{12m^2\binom{n}m}\sum\limits_{i,j,i\not=j}
\Big(\sum\limits_{S,|S|=m,S\ni{}i,j}1\Big)[x_i,[x_i,x_j]]\\
&-\frac1{12m^3{\binom{n}{m}}^2}\sum_{i,j,k}\Big(\sum_{S,T,|S|=|T|=m,\ S\not=T,\ S\ni{}i,j,\ T\ni{}k}1\Big)
[x_i,[x_j,x_k]]
\end{align*}
The last term breaks up into parts (as in the calculation of \S5) according to the coincidences amongst the indices $i,j,k$. Since $[x_i,[x_j,x_k]]$ is antisymmetric in $j$, $k$, the only non-trivial sums appearing will be when $i=j\not=k$ or $i=k\not=j$, and here these sums will be identical except for sign. In these two cases, the possible distinct sets $S$, $T$ of order $m$ for which $S\ni{}i,j$ and $T\ni{}k$ are divided into cases
\begin{align*}
(S\ni{}i,\ S\not\ni{}k,\ T\ni{}k)\hbox{\ or }(S\ni{}i,k,\ T\ni{}i,k,\ S\not=T)
\hbox{\ or }(S\ni{}i,k,\ T\ni{}k,\ T\not\ni{}i)&\\
(S\ni{}i,j,\ T\ni{}i,j,\ S\not=T)
\hbox{\ or }(S\ni{}i,j,\ T\ni{}i,\ T\not\ni{}j)&
\end{align*}
for $i=j\not=k$ and $i=k\not=j$, respectively.
The difference in these enumerations is thus $\binom{n-2}{m-1}\binom{n-1}{m-1}$ and so
\[\sum_{i,j,k}\ \sum_{S,T,|S|=|T|=m,S\not=T,S\ni{}i,j,T\ni{}k}
[x_i,[x_j,x_k]]=\binom{n-2}{m-1}\binom{n-1}{m-1}\sum_{i,j,i\not=j}[x_i,[x_i,x_j]]\]
so that the LHS of (3) up to second order simplifies to
\begin{align*}
&\tfrac{\binom{n-1}{m-1}}{m\binom{n}m}\sum\limits_jx_j
-\tfrac{\binom{n-2}{m-2}}{12m^2\binom{n}m}\sum\limits_{i,j,i\not=j}[x_i,[x_i,x_j]]
-\tfrac{\binom{n-2}{m-1}\binom{n-1}{m-1}}{12m^3{\binom{n}{m}}^2}\sum_{i,j,i\not=j}[x_i,[x_i,x_j]]\\
&=\tfrac1n\sum\limits_jx_j
-\tfrac{m-1}{12mn(n-1)}\sum\limits_{i,j,i\not=j}[x_i,[x_i,x_j]]
-\tfrac{n-m}{12mn^2(n-1)}\sum_{i,j,i\not=j}[x_i,[x_i,x_j]]\\
&=\tfrac1n\sum\limits_jx_j
-\tfrac{1}{12n^2}\sum\limits_{i,j,i\not=j}[x_i,[x_i,x_j]]
\end{align*}
which coincides with $\mu_n^{[\leq2]}$. Thus the conjecture holds up to second (or third) order in Lie brackets.

\section{An example: $SL(2,\R)$}

Consider the action of $G=SL(2,\R)$ on $X=\H$, the upper-half plane, defined by
\[\left(\begin{array}{cc}a&b\\ c&d\end{array}\right)\cdot{}z=\frac{az+b}{cz+d}\]
The corresponding infinitesimal action is of $\glie$ by the vector field
\[\left(\begin{array}{rr}a&b\\ c&-a\end{array}\right)\cdot{}z
\equiv\frac{d}{dt}\Bigg(\left(\begin{array}{cc}1+ta&tb\\ tc&1-ta\end{array}\right)\cdot{}z\Bigg)\Bigg|_{t=0}=-cz^2+2az+b\>,\]
the corresponding flow on $\H$ being given by $\dot{z}=-cz^2+2az+b$.
This has one fixed point in $\H$ whenever $a^2+bc<0$. Conversely, for each point $z\in\H$, there is a unique element of $\glie$ up to scaling which fixes it. This action is simple enough that the flow can be easily explicitly solved. From this, it can be determined that those elements of $\glie$ for which the flow in unit time fixes a point $z\in\H$ can be described as a union of two sets,  a one-dimensional subspace along with elements for which $a^2+bc$ is a negative integer multiple of $\pi^2$, for the latter the action of flow by unit time is the identity. For example, those elements which flow $i$ to itself in unit time (but for which the flow by unit time is not the identity) are precisely $\left(\begin{array}{cc}0&b\\ -b&0\end{array}\right)$, $b\in\R$.

\noindent Given two points $z,w\in\H$, there is a one-parameter family of elements of $\glie$ which flows $z$ to $w$ in unit time. For example, those elements of $G$ which take $-1+i$ to $1+i$ are
\[
\left(\begin{array}{cc}\cos{t}+\sin{t}&2\cos{t}\\ \sin{t}&\cos{t}+\sin{t}\end{array}\right), t\in\R
\]
By directly solving the flow equation, one finds that the elements of $\glie$ which flow $-1+i$ to $1+i$ in unit time are all of the form
\[
\left(\begin{array}{cc}0&b\\ c&0\end{array}\right),
\]
where $(b,c)=(2,0), (0,1)$ or for $bc>0$ it is parametrised by ($bc=t^2$)
\[b=t\big(\coth{t}+\sqrt{\csch^2{t}-1}\ \big),\ {}c=\frac12t\big(\coth{t}-\sqrt{\csch^2{t}-1}\ \big)\ (0<\sinh|t|<1)\]
while for $bc<0$ it is parametrised by ($bc=-t^2$)
\[b=t\big(\cot{t}+\sqrt{(\csc{t})^2+1}\ \big),\quad{}c=\frac12t\big(\cot{t}-\sqrt{(\csc{t})^2+1}\ \big),\quad t\not\in\pi\Z\]
In either parametrisation, as $t\rightarrow0+$, $(b,c)\rightarrow(2,0)$, while as
$t\rightarrow0-$, $(b,c)\rightarrow(0,1)$. Denote the subset of $\glie$ so defined by $W$. A plot of the associated points $(b,2c)$ in the plane $\R^2$ is shown below.

\[\includegraphics[width=.6\textwidth]{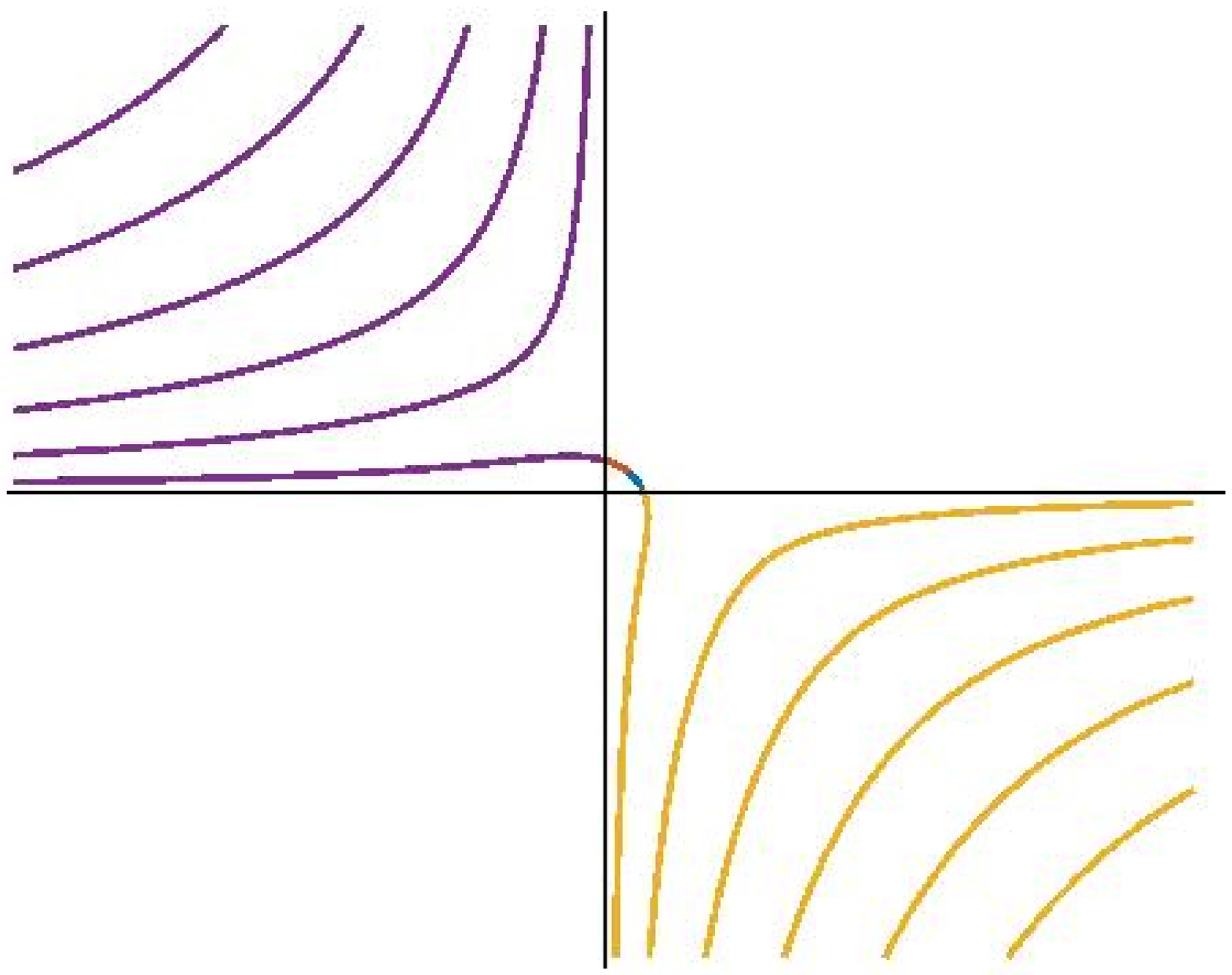}\]

\noindent The connection between the infinitesimal action and the action of $G$ are by the formulae
\begin{align*}
\exp{\left(\begin{array}{cc}0&b\\ c&0\end{array}\right)}
&=\cosh\sqrt{bc}\cdot{}I+\frac{\sinh\sqrt{bc}}{\sqrt{bc}}\left(\begin{array}{cc}0&b\\ c&0\end{array}\right)\quad\hbox{for $bc>0$}\\
\exp{\left(\begin{array}{cc}0&b\\ c&0\end{array}\right)}
&=\cos\sqrt{-bc}\cdot{}I+\frac{\sin\sqrt{-bc}}{\sqrt{-bc}}\left(\begin{array}{cc}0&b\\ c&0\end{array}\right)\quad\hbox{for $bc<0$}
\end{align*}

The result of this paper defines maps $\mu_n:W_i^n\longrightarrow{}W_i$ for each component $W_i$ of $W$.

Since $w_0\equiv\left(\begin{array}{cc}0&2\\ 0&0\end{array}\right)\in{}W$ and $\left(\begin{array}{cc}1&2\\ -1&-1\end{array}\right)\in\glie$ fixes $-1+i$, hence
\[
\BCH\left(\left(\begin{array}{cc}0&2\\ 0&0\end{array}\right),
s\left(\begin{array}{cc}1&2\\ -1&-1\end{array}\right)\right)\in{}W
\]
for $s\in\R$. In this parametrisation, according to Lemma 6.1(i), $\mu_n$ will act as the arithmetic mean on the parameters $s$.

\section{Conclusions and applications}

In \S5, we only managed to determine $\mu_n$ up to third order in brackets.
For specific values of $n$, it is possible to use Lemma 4.2, to computationally determine the sequences of coefficients in $\mu_n$, using a Hall basis for the free Lie algebra. This was carried out up to third order in \cite{Si}, but in principle the same technique could be used to higher orders. Knowing the existence of recurrence relations with polynomial coefficients for these sequences, formulae can then be derived for them as functions of $n$. It is unclear which method is faster to determine higher order coefficients, a direct analytic technique or using computations for specific $n$. Note that the dimension of the totally symmetric part of the four bracket part of the free Lie algebra is greater than 1, so that unlike in \S5, the recurrence relations obtained even for the 4-bracket part will be systems of homogeneous linear recurrence relations with polynomial coefficients.

As noted in \S3, the motivating example for this paper is that of DGLA models of cell complexes. For a cubical cell, pick two opposite vertices and consider the six minimal paths from one to the other. By the functorial nature of the dependence of $A(X)$ on $X$, a model of the cube can be obtained from that of a six-faceted banana; see \cite{GL2}. Starting from a symmetric model of the six-faceted banana as generated from $\mu_6$, the generated model of the cube will share those symmetries of the cube which preserve the chosen diagonal.

It is expected that there may be other applications of universal averages, particularly as they seem to satisfy various interesting relations as in \S6.

\bigskip\noindent{\bf Acknowledgements:} This research was supported in part by Grant No 2016219 from the United States-Israel Binational Science Foundation (BSF). The authors wish to thank D. Kazhdan for the suggestion to look at the example of $SL_2(\R)$.

\end{document}